\begin{document}

\section{Introduction}

Fractional Brownian motions (FBMs) with various Hurst parameters $H\in (0,1)$ have 
been enticing researchers of financial mathematics for a long time, since the appearance
of \cite{mandelbrot} where such models for asset prices were suggested for the first time. 
Econometric literature eagerly investigated related models, see \cite{baillie} for an early survey. 

In idealistic models of trading, where market imperfections are disregarded,
FBMs do not provide admissible models since they generate arbitrage
opportunities (for $H\neq 1/2$), see \cite{rogers}, hence they quickly fell out of favour. 
Subsequent research revealed, however, that in the presence of market frictions, arbitrage disappears and FBMs
become eligible candidates for describing prices, see \cite{guasoni2006no} and \cite{gr}. 

Not only prices but also volatilites were also successfully modelled using FBMs, see e.g.\ \cite{cr,gjr}.
Note that, though market volatility is not an asset, trading so-called VIX futures is essentially
equivalent to ``trading'' the volatility, see \cite{tim}.


In markets with instantaneous price impact the first analysis of long-term investment 
for an asset price following an FBM has been carried out in \cite{gnr}: the
optimal growth rate of expected portfolio wealth has been found and an asymptotically optimal strategy has been exhibited.
The robustness of such results was the next natural question:
is the particular structure of FBMs needed for these conclusions? In \cite{gnr}
a larger class of Gaussian processes could also be treated where future increments are
positively correlated to the past and the covariance structure is similar to that of FBMs with $H>1/2$. 
The question of extending the case of FBMs with $H<1/2$ to more general models remained open.

The current paper provides such an extension, based on more involved estimates than in the positively
correlated case. For simplicity, we stay in a discrete-time setting. We derive conclusions
similar to those of \cite{gnr} in the case $H<1/2$, but this time for a larger class of Gaussian processes.

These models allow negative price values. As such, they can directly describe
futures contracts. They can also be considered as stylized models reflecting important characteristics
of a more general class of processes. Our theoretical results then give hints to approach more
complex and realistic models as well. The situation is similar to that of the Bachelier model
(allowing negative prices) and the Black-Scholes model: for many practical purposes they are
equivalent, see e.g.\ \cite{walter}.

Our results are asymptotic in the sense that conclusions are derived for long horizons,
as in several standard settings of stochastic optimal control, see e.g.\ \cite{hl}.
It should be noted that such long horizons indeed arise e.g.\ in high-frequency trading
where investment strategies are executed at every millisecond and the trading interval
is several hours long every day. In the more usual trading regime (with actions taken every day or
every hour), the simulations of \cite{gnr} show that
our strategies perform as expected already on realistic time horizons (say, one year).

\section{Market model}\label{secMod}

Let $(\Omega,\mathcal{F},P)$ be a probability space equipped with a 
filtration $\mathcal{F}_t$, $t\in \mathbb{Z}$. Let $E[X]$ denote the expectation of a real-valued
random variable $X$ (when exists). 

We will consider a financial market 
where the price of a risky asset follows a process $S_t$, $t\in \mathbb{N}$, adapted to $\mathcal{F}_{t}$,
$t\in\mathbb{N}$. The riskless asset is assumed to have price constant $1$.

Realistic modelling needs to take into account market frictions.
We choose to be working in a market model with a temporary, nonlinear price impact where portfolios are penalized
by the integral of a certain power of the trading \emph{speed}. Such models were considered in
\cite{almgren-chriss,bertsimas-lo,garleanu-pedersen,kyle,rs} in continuous time and in \cite{dolinsky-soner}
in discrete time. These models subsume the popular choices of linear ($\alpha=2$ in the discussion below)
and square-root ($\alpha=3/2$) price impact.
 
For some $T\in\mathbb{N}$ 
the class of \emph{feasible strategies} up to terminal time $T$
is defined as 
\begin{equation}
\mathcal{S}(T):=\left\{\phi = (\phi_t)_{t = 0}^T:\phi\text{ is an $\mathbb{R}$-valued, adapted process}\right\}.
\end{equation}
As we will see, $\phi_{t}$ represents the \emph{change} in the investor's position in the given asset (the ``speed''
of trading, to emphasize the analogy with continuous-time models).
Let $z=(z^0,z^1)\in\mathbb{R}^{2}$ be a deterministic initial endowment where $z^{0}$ is in cash and $z^{1}$
is in the risky asset. 

For a feasible strategy $\phi\in\mathcal{S}(T)$, the number of shares in the risky asset, with $\Phi_0 = z^1$, at any time $t\geq 1$, is equal to
\begin{eqnarray}\label{eq:w}
\Phi_{t} := z^1+\sum_{u=0}^{t-1} \phi_u\,.
\end{eqnarray}

For simplicity, we assume $z^1=0$ from now on, i.e. the initial number of shares is zero. We will shortly derive a similar formula for the cash position of the investor.
In classical, frictionless models of trading, cash at time $T+1$ equals 
\begin{equation}\label{cukcsi}
\sum_{u=1}^{T+1} \Phi_{u} \left(S_u - S_{u-1} \right)
\end{equation}
when starting from a $0$ initial position. Algebraic manipulation of \eqref{cukcsi} yields 
$$
\sum_{u=1}^{T+1} \Phi_{u} \left(S_u - S_{u-1} \right)={}
-\sum_{u=0}^{T} \phi_u S_u + S_{T+1} \sum_{u=0}^T \phi_u.{}
$${}

We assume that price impact is a superlinear power function of the ``trading speed'' $\phi$ so we
augment the above with a term that implements the effect of friction:
$$-\sum_{u=0}^{T} \phi_u S_u + S_{T+1} \sum_{u=0}^T \phi_u  - \sum_{u=0}^T \lambda |\phi_u|^{\alpha}$$ 
where we assume $\alpha>1$ and $\lambda>0$. 
We wish to utilize only those portfolios where the risky asset is liquidated by the end of 
the trading period so we define 
$$
\mathcal{G}(T):=\left\{\phi\in\mathcal{S}(T):\Phi_{T+1}=\sum_{u=0}^T \phi_u = 0\right\}.
$$ 
Based on the previous discussion, for $\phi\in\mathcal{G}(T)$, the position in the riskless asset at time $T+1$ is 
defined by
\begin{eqnarray}\label{eq:selffin}
X_T(\phi) &:=& z^0-\sum_{u=0}^T \phi_u S_u-\sum_{u=0}^T \lambda |\phi_u|^{\alpha}.
\end{eqnarray}
For simplicity, we also assume $z^0=0$ from
now on, i.e.\ portfolios start from nothing. 

To investigate 
the potential of realizing monetary profits, we focus on a risk-neutral objective: 
a linear utility function. Let $x_-:=\max\{-x,0\}$ for $x\in\mathbb{R}$. Define, for $T\in \mathbb{N}$,  
$$
\mathcal{A}(T):=\left\{\phi\in\mathcal{G}(T):\,E[(X_T(\phi))_-]<\infty\right\},
$$
the class of strategies starting from a zero initial position in both assets and ending at time $T+1$ in 
a cash only position with expected value greater than $-\infty$. The value of the problem we will consider is thus
$$
u(T):=\sup_{\phi\in\mathcal{A}(T)}E[X_T(\phi)].
$$
The investors's objective is to find $\phi$ which, at least
asymptotically as $T\to\infty$, achieves the same growth rate as $u(T)$.

\section{Asymptotically optimal investment}\label{aso}




First we introduce assumptions on the price process and its dependence structure.

\begin{assumption}\label{ass} Let $Z_t$, ${t \in \mathbb{Z}}$ 
be a real-valued, zero-mean stationary Gaussian process which will represent price increments.
Let $\mathcal{F}_{n}:=\sigma(Z_{i},\ i\leq n)$ for $n\in\mathbb{N}$.
Let $r(t):=\mathrm{cov}(Z_{0},Z_{t})$, $t\in\mathbb{Z}$ denote its covariance function.
We assume that there exists $T_0>0$ and $J_1, J_2<0$ such that  
for all $t\geq T_0$,
\begin{equation}\label{man}
J_1 t^{\chi} \leq r(t) \leq J_2 t^{\chi}
\end{equation}
is satisfied
for some parameter $\chi \in \left(-2,-1\right)$. Furthermore, 
\begin{equation}\label{sumnul}
\sum_{t \in \mathbb{Z}} r(t)=0.	
\end{equation} 
Let us introduce the adapted price process defined by $S_0=0$ 
and $S_t=S_{t-1}+Z_t$, $t\geq 1$.
\end{assumption} 

\begin{remark}\label{woman} {\rm Properties \eqref{man} and \eqref{sumnul} express that $Z$ is a \emph{process with negative memory},
see Definition 1.1.1 on page 1 of \cite{gks}.
When $Z_{t}$, $t\in\mathbb{Z}$ are the increments of a FBM with Hurst parameter $H<1/2$, then \eqref{man} is
satisfied with $\chi:=2H-2$.}
\end{remark}

The next theorem is our main result: it provides the explicit form
of an (asymptotically) optimal strategy and determines its expected asymptotic growth rate.

\begin{theorem}\label{theo} 
Let Assumption \ref{ass} be in force. If $\lambda$ is small enough then
maximal expected profits satisfy
\begin{equation}\label{M1}
\limsup_{T\to\infty} \frac{u(T)}{T^{\left(\frac{\chi}{2}+1\right)\left(1+\frac{1}{\alpha-1}\right)+1}}< \infty
\end{equation}
and the strategy
\begin{equation}\label{M05}
\phi_t(T,\alpha):=
\begin{cases}
-\mathrm{sgn}(S_t)|S_t|^{\frac{1}{\alpha-1}}, & 0\leq t \leq 3 \lfloor T/6 \rfloor , \\
-\frac{1}{3 \lfloor T/6 \rfloor}\sum_{s=0}^{3 \lfloor T/6 \rfloor}\phi_s, & 3 \lfloor T/6 \rfloor  < t\leq 6 \lfloor T/6 \rfloor, \\
\ \ \ \ \ \ \ \ \ \ \ \ \ 0 , & \mathrm{ otherwise}
\end{cases}
\end{equation}
satisfies
\begin{equation}\label{M2}
\liminf_{T\to\infty}\frac{EX_T(\phi(T,\alpha))}{T^{\left(\frac{\chi}{2}+1\right)\left(1+\frac{1}{\alpha-1}\right)+1}}>0.
\end{equation}
\end{theorem}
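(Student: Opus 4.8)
Write $\alpha':=\alpha/(\alpha-1)$ for the conjugate exponent, $H:=\chi/2+1\in(0,1/2)$, $M:=3\lfloor T/6\rfloor$, and abbreviate the exponent in \eqref{M1}--\eqref{M2} as $\gamma:=(\chi/2+1)(1+1/(\alpha-1))+1=H\alpha'+1$. The plan is to show that the rate $T^{\gamma}$ is dictated by the growth of $E|S_t|^{\alpha'}$, to obtain \eqref{M1} from a pathwise convexity bound valid for \emph{every} strategy, and to obtain \eqref{M2} by evaluating the explicit strategy and controlling its liquidation phase. First I would pin down $\sigma_t^2:=\mathrm{Var}(S_t)$. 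Since $Z$ is stationary, $\sigma_t^2=\sum_{|k|<t}(t-|k|)r(k)$, and using $\sum_k r(k)=0$ from \eqref{sumnul} this rewrites as $\sigma_t^2=-t\sum_{|k|\ge t}r(k)-\sum_{|k|<t}|k|\,r(k)$. The two-sided bound \eqref{man} with $\chi\in(-2,-1)$ then gives $c_1 t^{\chi+2}\le\sigma_t^2\le c_2 t^{\chi+2}$ for $t$ large, i.e.\ $\sigma_t\asymp t^{H}$. As $S_t$ is centered Gaussian, $E|S_t|^{p}=c_p\sigma_t^{p}$ for every $p>0$, whence $E|S_t|^{\alpha'}\asymp t^{H\alpha'}$ and $\sum_{t=0}^{M}E|S_t|^{\alpha'}\asymp M^{\gamma}$, which identifies the target rate.

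\emph{Upper bound.} For arbitrary $\phi\in\mathcal{A}(T)$, \eqref{eq:selffin} gives $X_T(\phi)=-\sum_u\phi_u S_u-\lambda\sum_u|\phi_u|^{\alpha}$. I would apply Young's inequality to each summand of the first term: with $\veps^{\alpha}=\alpha\lambda$,
\[
-\phi_u S_u-\lambda|\phi_u|^{\alpha}\le \veps|\phi_u|\cdot\veps^{-1}|S_u|-\lambda|\phi_u|^{\alpha}\le \frac{\veps^{-\alpha'}}{\alpha'}|S_u|^{\alpha'},
\]
so that $X_T(\phi)\le C_\lambda\sum_{u=0}^{T}|S_u|^{\alpha'}$ pathwise, with $C_\lambda<\infty$ for every $\lambda>0$. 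Taking expectations and invoking the previous step yields $u(T)\le C_\lambda\sum_{u\le T}E|S_u|^{\alpha'}\le C\,T^{\gamma}$, which is \eqref{M1}. Note this bound requires no smallness of $\lambda$.

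\emph{Lower bound, main term.} Evaluating $\phi(T,\alpha)$ I would split the sum over the two active phases. On $0\le t\le M$ one has $-\phi_t S_t=|S_t|^{\alpha'}$ and $|\phi_t|^{\alpha}=|S_t|^{\alpha'}$, so the first phase contributes exactly $(1-\lambda)\sum_{t=0}^{M}|S_t|^{\alpha'}$, with expectation $\asymp(1-\lambda)M^{\gamma}>0$ once $\lambda<1$; this is the main profit. Writing $\Phi:=\sum_{s=0}^{M}\phi_s$ for the inventory carried into the liquidation phase and $c:=-\Phi/M$ for the constant liquidation speed, the second phase contributes
\[
A-\lambda\,\frac{|\Phi|^{\alpha}}{M^{\alpha-1}},\qquad A:=\frac{\Phi}{M}\sum_{t=M+1}^{2M}S_t .
\]

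\emph{Lower bound, the liquidation phase.} This is the main obstacle. Because of self-similarity, both second-phase terms turn out to be genuinely of order $M^{\gamma}$, not negligible, so one must keep track of constants. The inventory $\Phi=-\sum_{s=0}^{M}\mathrm{sgn}(S_s)|S_s|^{1/(\alpha-1)}$ is a non-Gaussian functional; I would expand $x\mapsto-\mathrm{sgn}(x)|x|^{1/(\alpha-1)}$ in Hermite polynomials, reducing $E\Phi^2$ and $E[A]$ to first-chaos computations of the form $\mathrm{Var}(\sum_i w_i Z_i)=-\tfrac12\sum_{i,j}(w_i-w_j)^2 r(i-j)$ (again using \eqref{sumnul}) and weighted sums of $\mathrm{cov}(S_s,S_t)$, which \eqref{man} renders computable; Gaussian hypercontractivity then controls the higher moment, giving $E|\Phi|^{\alpha}=O(M^{\gamma+\alpha-1})$ and hence a liquidation cost of order $\lambda M^{\gamma}$. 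The delicate point is the sign of the cross term: the first-chaos computation shows that the main profit and $E[A]$ combine into a \emph{strictly positive} multiple of $M^{\gamma}$, and this positivity is exactly where negative memory ($H<1/2$) enters, degenerating to zero at the boundary $H=1/2$. With the profit-plus-cross-term constant strictly positive and $\lambda$-free, and the liquidation cost of order $\lambda M^{\gamma}$, choosing $\lambda$ small enough yields $\liminf_{T}EX_T(\phi(T,\alpha))/T^{\gamma}>0$, which is \eqref{M2}.
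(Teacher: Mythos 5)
Your architecture is the same as the paper's: the two-sided variance estimate $\mathrm{var}(S_t)\asymp t^{2H}$ (the paper's Lemmas \ref{lem1} and \ref{lem2}), the Young/Fenchel--Legendre bound $X_T(\phi)\le C_\lambda\sum_t|S_t|^{\alpha/(\alpha-1)}$ giving \eqref{M1} for every $\lambda>0$, and your split of $X_T(\phi(T,\alpha))$ into first-phase profit, first-phase friction, liquidation cross term $A$ and liquidation cost are exactly the paper's terms $I_1,\dots,I_4$. Two minor repairable points: hypercontractivity does not apply directly to $\Phi$, which is not of fixed chaos order (Jensen's inequality gives $|\Phi|^\alpha\le (M+1)^{\alpha-1}\sum_s|S_s|^{\alpha/(\alpha-1)}$, hence $E|\Phi|^\alpha=O(M^{\gamma+\alpha-1})$ at once, which is how the paper bounds $I_4$); and ``self-similarity'' is not available under Assumption \ref{ass}, only the two-sided bounds \eqref{man}.

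The genuine gap is the one sentence on which the whole theorem rests: ``the first-chaos computation shows that the main profit and $E[A]$ combine into a strictly positive multiple of $M^\gamma$.'' The first-chaos (Gaussian regression) reduction is indeed correct and gives, with $P:=\sum_{s=0}^{M}|S_s|^{\alpha/(\alpha-1)}$ and $\rho(t,s):=\mathrm{cov}(S_s,S_t)/\mathrm{var}(S_s)$,
\begin{equation*}
E[P]+E[A]=\frac1M\sum_{t=M+1}^{2M}\sum_{s=0}^{M}\bigl(1-\rho(t,s)\bigr)E|S_s|^{\frac{\alpha}{\alpha-1}},
\end{equation*}
but nothing you have established signs this quantity, and it cannot be signed from your variance asymptotics: writing $1-\rho(t,s)=(\sigma_s^2+\sigma_{t-s}^2-\sigma_t^2)/(2\sigma_s^2)$, your bounds $c_1u^{2H}\le\sigma_u^2\le c_2u^{2H}$ have $c_1<c_2$ (because $J_1\neq J_2$ in \eqref{man}), so $\sigma_s^2+\sigma_{t-s}^2-\sigma_t^2\ge c_1\bigl(s^{2H}+(t-s)^{2H}\bigr)-c_2t^{2H}$ has no definite sign. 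The FBM computation you appear to have in mind ($s^{2H}+(t-s)^{2H}-t^{2H}>0$ for $H<1/2$, vanishing at $H=1/2$) uses the exact self-similar covariance and does not survive two-sided bounds with mismatched constants. What is needed instead --- and this is the technical core of the paper, Lemmas \ref{lem3} and \ref{LEM} --- is to return to the double-sum representation $\mathrm{cov}(S_t-S_s,S_s)=\sum_{j=1}^{s}\sum_{i=s+1}^{t}r(i-j)$ and exploit the pointwise negativity of $r$ at lags $\ge T_0$. This yields (i) $\mathrm{cov}(S_t-S_s,S_s)\le D_1$ uniformly in $s<t$, so $\rho\le 1+R$ everywhere and $\rho\le 1$ once $t-s>K$; and (ii) $\mathrm{cov}(S_t-S_s,S_s)\le D_2-U(v)s^{2H}$ with $U(v)>0$ whenever $t/s>v>1$, hence $\rho(t,s)\le 1-\veps$ on the rectangle $\{\bar T\le s\le M,\ t>\tfrac43 M\}$, which carries a fixed positive fraction of the weight $E|S_s|^{\alpha/(\alpha-1)}$. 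Splitting the index set into this rectangle, the strip $t\le M+K$, the strip $s<\bar T$, and the remainder (where $\rho\le 1$) produces the $\lambda$-free margin $\tfrac{2\veps}{3}Q(M)\gtrsim M^{\gamma}$, which dominates the $O(\lambda M^{\gamma})$ friction terms precisely when $\lambda<\veps/3$. Without this covariance analysis your proposal asserts, rather than proves, \eqref{M2}.
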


\begin{remark}
The strategy above builds on the following intuition.  In a market with friction one can not sell or buy with arbitrary speeds.  Such behavior is punished in superlinear price-impact models, strategies that are not trading assets gradually can generate losses that ruin an otherwise profitable investment.  Thus, liquidation must also be done at a careful pace. Our strategy operates as follows. On the first half of the given timeline it trades the underlying in a contrarian manner, that is going short when prices are high and entering long positions when low.  It is intuitively clear that, due to the superlinear nature of friction, liquidation is best done with a constant speed. This is reflected in our strategy on the second half of the timeline. 
\end{remark}

\section{Proofs}

\subsection{General bounds for variance and covariance}\label{sec::variousbounds}

First we make some useful preliminary observations. Using stationarity of the increments of the process $S$, we have 
\begin{align}\label{a-1}
\begin{split}
\mathrm{var}(S_t) & = \mathrm{cov}(S_t,S_t)  =\mathrm{cov}(\sum_{j=1}^t S_j-S_{j-1},\sum_{i=1}^t S_i-S_{i-1})
\\ & = t \cdot \mathrm{var}(S_1-S_0) + 2\sum_{i=2}^t \sum_{j=1}^{i-1} \mathrm{cov}(S_j-S_{j-1},S_i-S_{i-1})
\\ & =t \cdot \mathrm{var}(S_1-S_0) + 2\sum_{i=2}^t \sum_{j=1}^{i-1} \mathrm{cov}(S_1-S_{0},S_{i-j+1}-S_{i-j})
\\ & = t \cdot r(0) + 2\sum_{i=2}^t \sum_{j=1}^{i-1} r(i-j).
\end{split}
\end{align}
Furthermore, for $s>t$ we similarly have
\begin{align}\label{a-2}
\mathrm{cov}(S_s-S_t,S_t) = \sum_{i=t+1}^s \sum_{j=1}^t r(i-j).
\end{align}
Observe also that we can write
\begin{equation}\label{a-3}
r(0)=-2 \sum_{j=1}^{\infty} r(j).
\end{equation}
Turning to the variances, we first obtain a convenient expression for them. 
Note that for $i >1$ we have 
\begin{equation}\label{pami}
\sum_{j=1}^{i-1} r(i-j) = r(i-1)+\ldots+r(1)=r(1)+\ldots+r(i-1)=\sum_{j=1}^{i-1} r(j).
\end{equation} 
Using the observations \eqref{pami}, (\ref{a-1}) and (\ref{a-3}), we have 
\begin{align*}
\mathrm{var}(S_t) & =-2t \sum_{j=1}^{t-1}r(j) -2t\sum_{j=t}^{\infty} r(j) + 2\sum_{i=2}^t\sum_{j=1}^{i-1} r(j),
  \end{align*}
 and algebraic manipulation of the summation operation  
 $\left( -2t \sum_{j=1}^{t-1} + 2\sum_{i=2}^t\sum_{j=1}^{i-1} \right)$ yields
 \begin{align*}
& -2t \sum_{j=1}^{t-1}  + 2\sum_{i=2}^t\sum_{j=1}^{i-1}
\\ & =-2t \left ( \sum_{j=1}^{T_0-1}+\sum_{j=T_0}^{t-1} \right )  + 2 \left ( \sum_{i=2}^{T_0-1}+\sum_{i=T_0}^t \right ) \sum_{j=1}^{i-1} 
\\ & = -2t \sum_{j=1}^{T_0-1}-2t\sum_{j=T_0}^{t-1} + 2 \sum_{i=2}^{T_0-1}\sum_{j=1}^{i-1}+2\sum_{i=T_0}^t \sum_{j=1}^{i-1} 
\\ & =-2t \sum_{j=1}^{T_0-1}-2t\sum_{j=T_0}^{t-1}   + 2\sum_{i=2}^{T_0-1}\sum_{j=1}^{i-1}+ 2 \sum_{j=1}^{T_0-1} + 2 \sum_{i=T_0+1}^t \left( \sum_{j=1}^{T_0-1} + \sum_{j=T_0}^{i-1} \right) 
\\ & =-2t \sum_{j=1}^{T_0-1}-2t\sum_{j=T_0}^{t-1}    + 2\sum_{i=2}^{T_0-1} \sum_{j=1}^{i-1} + 2 \sum_{j=1}^{T_0-1} + 2 \sum_{i=T_0+1}^t \sum_{j=1}^{T_0-1}  + 2 \sum_{i=T_0+1}^t \sum_{j=T_0}^{i-1} 
\\ & =-2t \sum_{j=1}^{T_0-1}    + 2\sum_{i=2}^{T_0-1} \sum_{j=1}^{i-1} + 2 \sum_{j=1}^{T_0-1} + 2 \sum_{i=T_0+1}^t \sum_{j=1}^{T_0-1}-2t\sum_{j=T_0}^{t-1}  + 2 \sum_{i=T_0+1}^t \sum_{j=T_0}^{i-1},
 \end{align*}
 where the last line is only a reordering of terms.
Setting  $C_1=\sum_{j=1}^{T_0-1}r(j)$, $C_2=\sum_{i=2}^{T_0-1} \sum_{j=1}^{i-1} r(j)$ and $C_3=2(C_2-(T_0-1)C_1)$, 
the above calculation gives
\begin{align}\label{a-4}
\begin{split}
\mathrm{var}(S_t) & = - 2tC_1 +  2C_2 + 2C_1 + 2 (t-T_0) C_1 
+\left (-2t\sum_{j=t}^{\infty}  -2t \sum_{j=T_0}^{t-1}  + 2 \sum_{i=T_0+1}^t  \sum_{j=T_0}^{i-1}  \right) r(j)
\\ & = C_3 +\left (-2t\sum_{j=t}^{\infty} -2t \sum_{j=T_0}^{t-1} + 2 \sum_{i=T_0+1}^t  \sum_{j=T_0}^{i-1}  \right) r(j)
\end{split}
\end{align}

From now on we will work with the parameter $H:=\frac{\chi}{2}+1$ for convenience. This parameter
choice also refers back to the case of
FBMs, see Remark \ref{woman}. 

Now we are ready to present three lemmas, providing a lower and an upper bound for the variance and 
an upper bound for the covariance.

\begin{lemma}\label{lem1}
There exist $T_1\in\mathbb{N}$ and $B_1>0$ such that for all $t \geq T_1$ we have 
\begin{align*}
\mathrm{var}(S_t) \geq B_1 t^{2H}.
\end{align*}
\end{lemma}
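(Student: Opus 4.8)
The plan is to start from the exact expression \eqref{a-4} for $\mathrm{var}(S_t)$ and to exploit the negative-memory hypothesis through the identity \eqref{a-3} (equivalently, $\sum_{t\in\mathbb{Z}}r(t)=0$). The essential point is that \eqref{a-4} contains several contributions that are individually of linear order in $t$; were these not to cancel, the variance would grow like $t$, which is incompatible with the claim since $2H=\chi+2<1$. So the first and most important step is to make this cancellation explicit.

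To that end I introduce the tail sum $R_0:=\sum_{j=T_0}^{\infty}r(j)$, which is finite because \eqref{man} makes $r$ summable ($\chi<-1$). Writing $\sum_{j=T_0}^{t-1}r(j)=R_0-\sum_{j=t}^{\infty}r(j)$ collapses the first two bracketed terms of \eqref{a-4} into $-2tR_0$, while $\sum_{j=T_0}^{i-1}r(j)=R_0-\sum_{j=i}^{\infty}r(j)$ turns the double sum into $2(t-T_0)R_0-2\sum_{i=T_0+1}^{t}\sum_{j=i}^{\infty}r(j)$. The linear pieces $-2tR_0$ and $2tR_0$ cancel, leaving
\begin{equation*}
\mathrm{var}(S_t)=\left(C_3-2T_0R_0\right)-2\sum_{i=T_0+1}^{t}\sum_{j=i}^{\infty}r(j),
\end{equation*}
a fixed constant plus a nonnegative remainder (nonnegative because each inner tail runs over indices $j\geq i>T_0$, where $r(j)<0$).

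It then remains to bound the remainder below by a multiple of $t^{2H}=t^{\chi+2}$. Since every index in the inner sum exceeds $T_0$, the upper estimate $r(j)\leq J_2 j^{\chi}$ of \eqref{man} applies; as $J_2<0$ and $\chi+1<0$, comparing the inner sum with $\int_i^{\infty}x^{\chi}\,dx$ gives
\begin{equation*}
-2\sum_{i=T_0+1}^{t}\sum_{j=i}^{\infty}r(j)\geq -2J_2\sum_{i=T_0+1}^{t}\sum_{j=i}^{\infty}j^{\chi}\geq \frac{2J_2}{\chi+1}\sum_{i=T_0+1}^{t}i^{\chi+1},
\end{equation*}
and a second integral comparison of $\sum_i i^{\chi+1}$ (the summand decreases, since $\chi+1\in(-1,0)$) with $\int x^{\chi+1}\,dx$ yields a term of order $(t+1)^{\chi+2}$ with the strictly positive coefficient $2J_2/[(\chi+1)(\chi+2)]$ (positive since $(\chi+1)(\chi+2)<0$ and $J_2<0$). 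Thus $\mathrm{var}(S_t)$ exceeds a positive constant times $(t+1)^{\chi+2}$ minus fixed constants, and since $(t+1)^{\chi+2}\to\infty$ it suffices to take $T_1$ large and $B_1$ slightly below the leading coefficient. I expect the main obstacle to be the telescoping itself, where the linear-in-$t$ terms must cancel exactly---this is precisely where negative memory is used---together with the sign bookkeeping throughout, as $r(j)$, $J_1$, $J_2$ are all negative while $\chi+1<0<\chi+2$.
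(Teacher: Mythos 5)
Your proof is correct, and it takes a genuinely different decomposition from the paper's, although both start from the identity \eqref{a-4}. The paper keeps \eqref{a-4} as it stands: it shows that the combination $\bigl(-2t\sum_{j=T_0}^{t-1}+2\sum_{i=T_0+1}^{t}\sum_{j=T_0}^{i-1}\bigr)r(j)$ is bounded below by $-2T_0\sum_{j=T_0}^{t-1}r(j)\geq 0$ and simply discards it, extracting the $t^{2H}$ growth from the remaining single tail term via one integral comparison, $-2t\sum_{j=t}^{\infty}r(j)\geq \frac{2J_2}{2H-1}t^{2H}$. You instead collapse \eqref{a-4} exactly, through the tail sums $\sum_{j\geq i}r(j)$, into a constant plus the nonnegative double sum $-2\sum_{i=T_0+1}^{t}\sum_{j=i}^{\infty}r(j)$, and obtain the growth from that sum with two integral comparisons; your leading constant $2J_2/[(\chi+1)(\chi+2)]$ therefore differs from the paper's $2J_2/(2H-1)$ (recall $\chi+1=2H-1$ and $\chi+2=2H$), but both are positive, which is all that matters. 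The paper's route is shorter; yours buys an exact representation of the variance (a constant plus a monotone nonnegative remainder) which would also deliver the matching upper bound of Lemma \ref{lem2} immediately, by replacing $J_2$ with $J_1$ in the same estimate, whereas the paper proves Lemma \ref{lem2} by a separate and more laborious manipulation. One small point of framing: both arguments use negative memory in the same place, namely \eqref{sumnul} enters through \eqref{a-3} in the derivation of \eqref{a-4} itself; your telescoping does not re-use \eqref{a-3} but rather makes explicit the cancellation that \eqref{a-4} already encodes.
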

\begin{proof}
Using properties induced by the choice of $T_0$ in Assumption \ref{ass} first note that
\begin{align*}
& \left ( -2t \sum_{j=T_0}^{t-1}  + 2 \sum_{i=T_0+1}^t  \sum_{j=T_0}^{i-1}  \right) r(j)
\\ & \geq \left ( -2t \sum_{j=T_0}^{t-1} + 2 (t-T_0) \sum_{j=T_0}^{t-1}  \right) r(j)
\\ & = -2T_0 \sum_{j=T_0}^{t-1}  r(j) \geq 0.
\end{align*}
Also notice that
\begin{align*}
-2t\sum_{j=t}^{\infty} r(j) & \geq -2J_2t\sum_{j=t}^{\infty} j^{2H-2}  \geq  -2J_2t\int_t^{\infty}u^{2H-2}du
\\ & =  -2J_2t \frac{1}{2H-1} \left( - t^{2H-1} \right ) =   \frac{2J_2}{2H-1} t^{2H}.
\end{align*}
Using these and (\ref{a-4})
\begin{align*}
\mathrm{var}(S_t) \geq C_3 + \frac{2J_2}{2H-1} t^{2H}.
\end{align*}
The threshold $T_1$ and the constant $B_1$ can be explicitly calculated in terms of the constants present in the above expression. This completes the proof.
\end{proof}

\begin{lemma}\label{lem2}
There exist $T_2\in\mathbb{N}$ and $B_2>0$ such that for all $t \geq T_2$ we have 
\begin{align*}
\mathrm{var}(S_t) \leq B_2 t^{2H}.
\end{align*}
\end{lemma}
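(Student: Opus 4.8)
The plan is to mirror the structure of Lemma~\ref{lem1}, again starting from the exact decomposition \eqref{a-4}. The quantity $\mathrm{var}(S_t)-C_3$ splits into three pieces: the tail sum $-2t\sum_{j=t}^{\infty}r(j)$, the truncated sum $-2t\sum_{j=T_0}^{t-1}r(j)$, and the double sum $2\sum_{i=T_0+1}^t\sum_{j=T_0}^{i-1}r(j)$. Since we now want an \emph{upper} bound, I would keep all the negative signs in mind and use the bounds in \eqref{man} pointing the opposite way from Lemma~\ref{lem1}: wherever $r(j)<0$ is multiplied by a negative prefactor we have a positive contribution, and there I would estimate $r(j)$ from below by $J_1 j^{2H-2}$, while wherever the prefactor is positive I would use $r(j)\le J_2 j^{2H-2}<0$.

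Concretely, for the tail term, $-2t\sum_{j=t}^{\infty}r(j)\le -2J_1 t\sum_{j=t}^{\infty}j^{2H-2}$, and I would bound the sum above by $\int_{t-1}^{\infty}u^{2H-2}\,du = \frac{1}{1-2H}(t-1)^{2H-1}$ (note $2H-2=\chi\in(-2,-1)$ so $2H-1<0$ and the integral converges), giving a term of order $t\cdot t^{2H-1}=t^{2H}$ with a positive constant since $J_1<0$. The combined middle-and-double-sum term $\left(-2t\sum_{j=T_0}^{t-1}+2\sum_{i=T_0+1}^t\sum_{j=T_0}^{i-1}\right)r(j)$ I would first simplify as in Lemma~\ref{lem1}: interchanging the order of summation in the double sum shows it equals $2\sum_{j=T_0}^{t-1}(t-j)r(j)$, so the whole bracket reduces to $-2\sum_{j=T_0}^{t-1} j\, r(j)$. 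Because $r(j)<0$ for $j\ge T_0$, this is $=2\sum_{j=T_0}^{t-1} j\,|r(j)|$, a positive quantity, which I bound above using $|r(j)|\le |J_1| j^{2H-2}$, yielding $2|J_1|\sum_{j=T_0}^{t-1} j^{2H-1}$. Since $2H-1\in(-1,0)$, this sum is $O(t^{2H})$ by comparison with $\int_0^t u^{2H-1}\,du = \frac{1}{2H}t^{2H}$.

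Summing the two contributions gives $\mathrm{var}(S_t)\le C_3 + K t^{2H}$ for an explicit constant $K>0$, and since $t^{2H}\to\infty$ (as $2H=\chi+2\in(0,1)$, so $H>0$), the additive constant $C_3$ can be absorbed for $t$ large enough by slightly enlarging the constant, producing $\mathrm{var}(S_t)\le B_2 t^{2H}$ for $t\ge T_2$. The threshold $T_2$ and constant $B_2$ are then explicit in terms of $J_1$, $H$, $T_0$, $C_3$, exactly as stated.

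I expect the main subtlety to be the bookkeeping with signs: since all the $r(j)$ are negative in the relevant range while the constants $J_1,J_2$ are also negative, one must be careful to pick the \emph{correct} inequality in \eqref{man} at each step so that the estimate goes in the upper-bound direction, and careful that each integral comparison (summation bounded by integral) is set up on the correct side. The convergence of the tail integral, ensured by $\chi<-1$, and the correct handling of the absorption of $C_3$ using $H>0$ are the points where the hypotheses $\chi\in(-2,-1)$ are genuinely used; no new analytic idea beyond Lemma~\ref{lem1} is required.
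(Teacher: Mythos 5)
Your proposal is correct and takes essentially the same route as the paper: both start from the decomposition \eqref{a-4}, bound the tail term and the bracketed term from above using $r(j)\ge J_1 j^{2H-2}$ (the relevant side of \eqref{man}, since both contributions are positive), compare sums with integrals, and absorb the constant $C_3$ using $2H>0$. Your only deviation is cosmetic: interchanging the summation order collapses the bracket to $-2\sum_{j=T_0}^{t-1} j\,r(j)$, which is exactly the quantity the paper bounds in the equivalent form $-2\sum_{i=T_0}^{t-1}\sum_{j=i+1}^{t-1}r(j)-2T_0\sum_{j=T_0}^{t-1}r(j)$.
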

\begin{proof}
First note that algebraic manipulation of the operation 
$\left ( -2t \sum_{j=T_0}^{t-1}  + 2 \sum_{i=T_0+1}^t \sum_{j=T_0}^{i-1}  \right)$ yields
\begin{align*}
& -2t \sum_{j=T_0}^{t-1}  + 2 \sum_{i=T_0+1}^t  \sum_{j=T_0}^{i-1}  = -2(t-T_0+T_0) \sum_{j=T_0}^{t-1}  + 2 \sum_{i=T_0}^{t-1}  \sum_{j=T_0}^{i} 
\\ & = -2 \sum_{i=T_0}^{t-1} \sum_{j=T_0}^{t-1}  + 2 \sum_{i=T_0}^{t-1}  \sum_{j=T_0}^{i}  -2T_0\sum_{j=T_0}^{t-1} = -2 \sum_{i=T_0}^{t-1} \left( \sum_{j=T_0}^{t-1}  -  \sum_{j=T_0}^{i} \right)  -2T_0\sum_{j=T_0}^{t-1}
\\ & =-2 \sum_{i=T_0}^{t-1}  \sum_{j=i+1}^{t-1}   -2T_0\sum_{j=T_0}^{t-1}.
\end{align*} 
By Assumption  \ref{ass}, this implies  
\begin{align*}
&\left( -2t   \sum_{j=T_0}^{t-1} + 2 \sum_{i=T_0+1}^t  \sum_{j=T_0}^{i-1}  \right) r(j) \leq -2 J_1 \left( \sum_{i=T_0}^{t-1}  \sum_{j=i+1}^{t-1} j^{2H-2}   +T_0\sum_{j=T_0}^{t-1} j^{2H-2} \right)
\\ &   \leq -2 J_1 \left( \sum_{i=T_0}^{t-1}  \int_{i}^{t-1} u^{2H-2} du   +T_0\int_{T_0-1}^{t-1} u^{2H-2} du \right)  
\\ & = - \frac{2 J_1}{2H-1} \left( \sum_{i=T_0}^{t-1}  \left( (t-1)^{2H-1} - i^{2H-1} \right)  +T_0 \left( (t-1)^{2H-1} - (T_0  - 1)^{2H-1} \right) \right)
\\ & = - \frac{2 J_1}{2H-1} \left( t (t-1)^{2H-1} - \sum_{i=T_0}^{t-1}   i^{2H-1}  - T_0 (T_0  - 1)^{2H-1} \right)
\\ & \leq  \frac{2 J_1}{2H-1} \sum_{i=T_0}^{t-1}   i^{2H-1} +  \frac{2 J_1}{2H-1} T_0 (T_0  - 1)^{2H-1}
\\ & \leq  \frac{2 J_1}{2H(2H-1)} ( (t-1)^{2H} - (T_0-1)^{2H} ) +  \frac{2 J_1}{2H-1} T_0 (T_0  - 1)^{2H-1}
\\ & \leq  \frac{2 J_1}{2H(2H-1)} t^{2H} +  \frac{2 J_1}{2H-1} T_0 (T_0  - 1)^{2H-1}.
\end{align*}
To proceed observe that, using the asymptotics in Assumption \ref{ass}, for $t>2$ we have
\begin{align*}
-2t \sum_{j=t}^{\infty} & r(j)  \leq - 2 J_1  t \sum_{j=t}^{\infty} j^{2H-2} \leq - 2 J_1  t \int_{t-1}^{\infty} u^{2H-2} du  
\\ & = \frac{2 J_1 t}{2H-1} (t-1)^{2H-1} \leq \frac{2 J_1 t}{2H-1} (t-t/2)^{2H-1}  
\\ & =\frac{2^{2-2H} J_1 }{2H-1}t^{2H}.
\end{align*} 
These results yield  for $t>\max(2,T_0)$, using again (\ref{a-4}), that
\begin{align}\label{a1}
\begin{split}
\mathrm{var}(S_t) & \leq C_3 + \left( \frac{2 J_1}{2H(2H-1)} + \frac{2^{2-2H} J_1 }{2H-1} \right) t^{2H} +  \frac{2 J_1}{2H-1} T_0 (T_0  - 1)^{2H-1}
\end{split}
\end{align}
The threshold $T_2$ and the constant $B_2$ could again be explicitly given. The proof is complete.
\end{proof}
\noindent We proceed with the lemma controlling the covariance $ \mathrm{cov}(S_s-S_t,S_t)$.
\begin{lemma}\label{lem3} 
There exist $T_3\in\mathbb{N}$ and $D_1, D_2>0$ such that
$$
\mathrm{cov}(S_s-S_t,S_t)\leq D_1 \ \mbox{  for all  } \ s>t>T_{3}.
$$ 
For a fixed $v>1$, define 
$$U(v):=J_2 \left(2H\right)^{-1}(2H-1)^{-1}  \left(  1-\left (    v^{2H}  -  (v-1)^{2H} \right) \right).$$
Then
$$
\mathrm{cov}(S_s-S_t,S_t)\leq D_2 - U(v) t^{2H}<0\mbox{ holds for all }s>t>T_{3}\mbox{ satisfying }\frac{s}{t}>v.$$
There exists ${K}>1$ and $T_4\in\mathbb{N}$ such that
$$
\mathrm{cov}(S_s-S_t,S_t)\leq 0\mbox{ for all }s>t>T_{4}\mbox{ satisfying }s-t>{K}.
$$
\end{lemma}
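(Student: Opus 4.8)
The plan is to work from the closed form \eqref{a-2}, namely $\mathrm{cov}(S_s-S_t,S_t)=\sum_{i=t+1}^s\sum_{j=1}^t r(i-j)$, and to regroup the double sum according to the lag $l=i-j$. Writing $\mathrm{cov}(S_s-S_t,S_t)=\sum_{l=1}^{s-1}N(l)\,r(l)$ with multiplicity $N(l):=\#\{(i,j):t+1\le i\le s,\ 1\le j\le t,\ i-j=l\}$, the weight $N$ is a trapezoidal profile: it rises with unit slope up to the height $\min(s-t,t)$, stays flat, and falls with unit slope back to $0$ at $l=s$. I would then split the lags into the short range $l<T_0$ and the long range $l\ge T_0$, where Assumption \ref{ass} gives pointwise control of $r$.

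For the uniform bound this split already suffices. In the short range there are at most $T_0-1$ lags, each with $N(l)\le T_0-1$ and $|r(l)|$ bounded, so their total contribution is bounded by a constant $D_1$ independent of $s,t$; in the long range every $r(l)\le J_2 l^{2H-2}<0$ while $N(l)\ge 0$, so the whole long-range contribution is non-positive. Hence $\mathrm{cov}(S_s-S_t,S_t)\le D_1$, which is the first assertion.

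For the two sharper, negative estimates I would retain the long-range sum and bound it from below by a double integral. Using $r(l)\le J_2 l^{2H-2}$ and comparing $\sum_{l\ge T_0}N(l)\,l^{2H-2}$ with $\int_t^s\!\int_0^t (x-y)^{2H-2}\,dy\,dx=\tfrac{1}{2H(2H-1)}\bigl[s^{2H}-(s-t)^{2H}-t^{2H}\bigr]$ (the sum--integral discrepancy and the finitely many truncated short lags being absorbed into an additive constant $D_2$), I expect the master estimate
\[
\mathrm{cov}(S_s-S_t,S_t)\le D_2+\kappa\bigl[s^{2H}-(s-t)^{2H}-t^{2H}\bigr],\qquad \kappa:=\frac{J_2}{2H(2H-1)}>0 .
\]
Since $2H\in(0,1)$, the bracket is $\le 0$ by subadditivity of $x\mapsto x^{2H}$, which re-proves the first assertion, and its precise shape yields the other two. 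For the second assertion I substitute $s=wt$ with $w>v$: the bracket becomes $-t^{2H}\bigl(1-(w^{2H}-(w-1)^{2H})\bigr)$, and since $w\mapsto 1-(w^{2H}-(w-1)^{2H})$ is increasing, the bound is dominated by $D_2-U(v)t^{2H}$, which is strictly negative once $t$ is large. For the third assertion I write $d=s-t$ and set $f(t,d):=t^{2H}+d^{2H}-(t+d)^{2H}>0$; as $f$ is strictly increasing in each variable with $f(t,d)\uparrow d^{2H}$ as $t\to\infty$, choosing $K$ so large that $\kappa K^{2H}>D_2$ and then $T_4$ so large that $\kappa f(t,K)>D_2$ for all $t\ge T_4$ forces $\mathrm{cov}(S_s-S_t,S_t)\le D_2-\kappa f(t,d)<0$ whenever $t>T_4$ and $s-t>K$.

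The main obstacle I anticipate is the sharp lower bound on $\sum_{l\ge T_0}N(l)\,l^{2H-2}$ with the \emph{exact} constant $1/(2H(2H-1))$: it is precisely this constant that produces $U(v)$ and that guarantees the subadditivity margin $f(t,d)$ can be made to beat the $O(1)$ short-range term. The crude variance bounds of Lemmas \ref{lem1}--\ref{lem2} do not suffice here, since their constants $B_1<B_2$ leave a gap that destroys the cancellation in $s^{2H}-(s-t)^{2H}-t^{2H}$. The delicate point is therefore the uniform (in $s,t$) control of the sum--integral error and of the truncated short lags, ensuring they remain $O(1)$ while the leading $t^{2H}$-term carries the sign.
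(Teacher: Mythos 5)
Your proposal is correct and follows essentially the same route as the paper: both arguments reduce the covariance to the comparison $\mathrm{cov}(S_s-S_t,S_t)\leq \mathrm{const} + \frac{J_2}{2H(2H-1)}\left[ s^{2H}-(s-t)^{2H}-t^{2H}\right]$ (the paper via iterated sum--integral estimates after isolating the short lags into the constant $C_6$, you via the lag-multiplicity regrouping and a double integral), and then both deduce the three statements from the same monotonicity properties of $x\mapsto x^{2H}$. Your trapezoidal bookkeeping of $N(l)$ is only an organizational variant, though it does give a slightly more direct proof of the first (uniform) bound, since the long-lag contribution is termwise non-positive.
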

\begin{proof}
\noindent Let us set 
$$
C_4=\sum_{j=-T_0+1}^0 \sum_{i=1}^{1+T_0}r(i-j),\quad{}
C_5=J_2\sum_{j=-T_0+1}^0 \sum_{i=1}^{1+T_0} (i-j)^{2H-2},
$$ 
and define $C_6=C_4-C_5$. Note that, for each $t\in\mathbb{N}$,
$C_4=\sum_{j=t-T_0+1}^t \sum_{i=t+1}^{t+1+T_0}r(i-j),$ and
$C_5=J_2\sum_{j=t-T_0+1}^t \sum_{i=t+1}^{t+1+T_0} (i-j)^{2H-2}$. For $t>T_0$, we have
\begin{align}\label{b0}
\begin{split}
\mathrm{cov}(S_s&-S_t,S_t)  =  \sum_{j=1}^t \sum_{i=t+1}^s r(i-j) 
\\ & \leq  C_6+ J_2  \sum_{j=1}^t \sum_{i=t+1}^s (i-j)^{2H-2} \leq C_6 +  J_2  \sum_{j=1}^t \int_{t+1-j}^{s+1-j} u^{2H-2} du
\\ & \leq C_6 +  \frac{J_2 }{2H-1}  \sum_{j=1}^t \left ( (s+1-j)^{2H-1} - (t+1-j)^{2H-1}  \right )
\\ & = C_6 +   \frac{J_2}{2H-1}  \sum_{j=1}^t  (s+1-j)^{2H-1} -  \frac{J_2 }{2H-1}  \sum_{j=1}^t   (t+1-j)^{2H-1}
\\ & \leq  C_6 + \frac{J_2 }{2H-1} \int_{s-t}^s u^{2H-1} du  -  \frac{J_2 }{2H-1}  \int_{1}^{t+1} u^{2H-1} du
\\ & = C_6 +   \frac{J_2 }{2H(2H-1)} \left ( s^{2H} - (s-t)^{2H} \right)   -  \frac{J_2 }{2H(2H-1)}   \left ( (t+1)^{2H} - 1 \right)
\\ & = C_6 +   \frac{J_2 }{2H(2H-1)} \left ( s^{2H} - (s-t)^{2H} -   \left ( (t+1)^{2H} - 1 \right) \right ).
\\ & =: C_6 +   C_7 \left ( s^{2H} - (s-t)^{2H} -   \left ( (t+1)^{2H} - 1 \right) \right ).
\end{split}
\end{align}
Since $s\geq t+1$ the expression $C_7 \left ( s^{2H} - (s-t)^{2H} -   \left ( (t+1)^{2H} - 1 \right) \right )$ is non-positive, which yields
\begin{align*}
\mathrm{cov}(S_s&-S_t,S_t)   \leq  C_6,
\end{align*}
proving the first statement of the lemma. Now, for all $v>1$ the property  $\frac{s}{t}>v$ - together with the previous constraint of $t>T_0$ - further implies
\begin{align}\label{b2.5}
\begin{split}
\mathrm{cov}(S_s&-S_t,S_t)  \leq  C_6 +   C_7 \left ( s^{2H} - (s-t)^{2H} -   \left ( (t+1)^{2H} - 1 \right) \right )
\\ & \leq C_6 +   C_7\left ( (v^{2H} - (v-1)^{2H} -1)t^{2H} + 1  \right )
\\ & = C_6 + C_7 +   C_7(v^{2H} - (v-1)^{2H} -1)t^{2H}.
\end{split}
\end{align}
Obviously, for large enough $t$ the bound becomes strictly negative, proving the second statement. Now, assuming  $s-t\geq K>1$ beside $t>T_0$ we have
\begin{align}\label{b2}
\begin{split}
\mathrm{cov}(S_s&-S_t,S_t)  \leq C_6 +   C_7\left ( (t+K)^{2H} - K^{2H} -   \left ( (t+1)^{2H} - 1 \right) \right )
\\ & = C_6 -    C_7\left (  K^{2H}  - 1 \right) + C_7 \left( ( t+K)^{2H} -   (t+1)^{2H} \right)
\\ & \leq C_6 -    C_7\left (  K^{2H}  - 1 \right) + C_7 2H K t^{2H-1}.
\end{split}
\end{align}
This shows that $K$ can be chosen so large that $C_6 -    C_7\left (  K^{2H}  - 1 \right)<0$ and then, since $2H-1<0$, a threshold $T_4$ - depending on $K$ - for the variable $t$ can be specified so that $$C_6 -    C_7\left (  K^{2H}  - 1 \right) + C_7 2H K t^{2H-1} \leq 0$$ whenever $t$ exceeds the threshold, proving the third statement, completing the proof of the lemma.
\end{proof}

\subsection{Key estimates}
Define  
$$
\rho(s,t):=\frac{\mathrm{cov}(S_s,S_t)}{\mathrm{var}(S_t)}=\frac{\mathrm{cov}(S_s-S_t,S_t)}{\mathrm{var}(S_t)} + 1,\ 
s\in\mathbb{N},\ t\in\mathbb{N}\setminus\{0\}.
$$

\begin{lemma}\label{LEM}
There exist $\bar{T}\in\mathbb{N}$ and constants $R >0$, $K>1$, $\eta \in (1/2,1)$ and  $\varepsilon>0$ such that 
\begin{enumerate}
\item $\rho(s,t)<1+R, \ \mbox{for all} \  t<s$;
\item $\rho(s,t)\leq 1, \ \mbox{whenever} \ \bar{T}<t<s \ \mbox{and}  \ s-t>K;$
\item For all $T\in\mathbb{N}$, $\rho(s,t)\leq 1-\varepsilon, \ \mbox{whenever}  \ \bar{T}<t<\frac{T}{2}<\eta T<s$.
Furthermore, one can also guarantee $T/2+K<\eta T$ in this case.
\end{enumerate}
\end{lemma}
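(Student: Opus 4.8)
The plan is to derive all three assertions from the identity $\rho(s,t)-1=\mathrm{cov}(S_s-S_t,S_t)/\mathrm{var}(S_t)$, feeding the covariance bounds of Lemma \ref{lem3} into the numerator and the variance bounds of Lemmas \ref{lem1} and \ref{lem2} into the denominator. The constants have to be chosen in a fixed order so that they remain mutually consistent. First I would fix any $\eta\in(1/2,1)$ and put $v:=2\eta>1$; the constraint $t<T/2<\eta T<s$ in (3) forces $s/t>2\eta=v$, which is precisely what activates the second bound of Lemma \ref{lem3}. With $\eta$ fixed, $U(v)$ is a strictly positive constant: since $J_2<0$ and $2H\in(0,1)$ we have $J_2(2H)^{-1}(2H-1)^{-1}>0$, while $1-(v^{2H}-(v-1)^{2H})>0$ by strict concavity of $x\mapsto x^{2H}$. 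I would then extract $K>1$, $D_1$, $D_2$ and the thresholds $T_1,\dots,T_4$ from Lemmas \ref{lem1}--\ref{lem3}, set $\varepsilon:=U(v)/(2B_2)$, and finally take $\bar T$ to exceed all these thresholds and to satisfy the two numerical constraints appearing below.

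Assertion (2) requires essentially no work: the third claim of Lemma \ref{lem3} gives $\mathrm{cov}(S_s-S_t,S_t)\le 0$ whenever $t>T_4$ and $s-t>K$, so with $\bar T\ge T_4$ and the same $K$ one gets $\rho(s,t)\le 1$.

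For (3), since $t>\bar T\ge T_3$ and $s/t>v$, the second bound of Lemma \ref{lem3} yields $\mathrm{cov}(S_s-S_t,S_t)\le D_2-U(v)t^{2H}$, and requiring $\bar T\ge(2D_2/U(v))^{1/(2H)}$ makes this right-hand side negative. Dividing this negative numerator by $\mathrm{var}(S_t)$, which satisfies $0<\mathrm{var}(S_t)\le B_2t^{2H}$ by Lemma \ref{lem2} (the upper variance bound \emph{reverses} the inequality because the numerator is negative), gives
\begin{equation*}
\rho(s,t)-1\le\frac{D_2-U(v)t^{2H}}{B_2t^{2H}}=\frac{D_2}{B_2t^{2H}}-\frac{U(v)}{B_2}\le-\frac{U(v)}{2B_2}=-\varepsilon
\end{equation*}
for $t\ge\bar T$. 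The ``furthermore'' is purely numerical: the hypotheses of (3) are non-vacuous only when $T>2\bar T$, whence $(\eta-1/2)T>(\eta-1/2)2\bar T$, and imposing $\bar T>K/(2(\eta-1/2))$ secures $(\eta-1/2)T>K$, i.e.\ $T/2+K<\eta T$.

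The real work is in (1), the \emph{uniform} bound over all $t<s$, because the polynomial estimates of Lemmas \ref{lem1}--\ref{lem3} only hold past their thresholds. I would split the range of $t$. For $t>\bar T$ the bound $\mathrm{cov}(S_s-S_t,S_t)\le D_1$ (first claim of Lemma \ref{lem3}) together with $\mathrm{var}(S_t)\ge B_1t^{2H}$ (Lemma \ref{lem1}) gives $\rho(s,t)-1\le D_1/(B_1t^{2H})\le D_1/(B_1\bar T^{2H})$, uniformly in $s$. The remaining \emph{finitely many} indices $1\le t\le\bar T$ are the delicate ones, and here I would avoid the asymptotic lemmas entirely: since $\chi<-1$ the covariance is absolutely summable, so $M:=\sum_{k\in\mathbb Z}|r(k)|<\infty$, and \eqref{a-2} gives $|\mathrm{cov}(S_s-S_t,S_t)|\le tM$ uniformly in $s$; combined with $\mathrm{var}(S_t)>0$ this bounds $\rho(s,t)-1$ by $\bar TM/\min_{1\le t\le\bar T}\mathrm{var}(S_t)$. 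Choosing $R$ strictly larger than the maximum of these two bounds finishes (1). I expect the genuine obstacle to be not any single estimate but the bookkeeping of the constants, together with establishing the strict positivity of $\mathrm{var}(S_t)$ for the small indices $t$ --- a non-degeneracy (hence non-asymptotic) fact about the Gaussian vector $(Z_1,\dots,Z_t)$, which does not follow from Lemmas \ref{lem1}--\ref{lem3} and must be argued from the assumption that $r\not\equiv 0$.
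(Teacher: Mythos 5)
Your proposal is correct and follows essentially the same route as the paper's proof: both deduce (2) from the third claim of Lemma \ref{lem3}, deduce (3) from its second claim with $v=2\eta$ (the paper fixes $\eta=2/3$, $v=4/3$) combined with the upper variance bound of Lemma \ref{lem2} and the same sign-reversal observation for the negative numerator, and prove (1) by splitting at $\bar T$, handling large $t$ via $D_1$ and Lemma \ref{lem1} and the finitely many small $t$ by a uniform-boundedness argument. If anything you are more careful than the paper, which silently assumes $\mathrm{var}(S_t)>0$ for small $t$ and bounds $\sup_s \rho(s,t)$ there by appealing to \eqref{b0}, whereas you flag this non-degeneracy explicitly and bound the covariance by summability of $r$; these differences are cosmetic.
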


\begin{proof}[Proof of Lemma \ref{LEM}]
Let $B_2$, $U(\cdot)$,  $T_{1}$, $T_{2}$, $T_3$, $T_4$, $D_1$, $D_2$ and  $K$ be as in Lemma \ref{lem2} and Lemma \ref{lem3}. Choose $T'>\max\{T_{1},T_{2},T_3\}$ so large that $\frac{ D_2   }{ B_2 }(T')^{-2H}  - \frac{ U(4/3)   }{ B_2 } < 0$
and set $\eta:=2/3$. 
Lemma \ref{lem2} and Lemma \ref{lem3} now show that whenever $T'<t<T/2$ and $s \in (\eta T, T)$, we have
\begin{align}\label{b3}
\begin{split}
& \frac{\mathrm{cov}(S_s-S_t,S_t)}{\mathrm{var}(S_t)} \leq 
\frac{ D_2   }{ B_2 }t^{-2H}  - \frac{ U(4/3)   }{ B_2 }
\leq 
\frac{ D_2   }{ B_2 }(T')^{-2H}  - \frac{ U(4/3)   }{ B_2 },
\end{split}
\end{align}
which yields $\rho(s,t)\leq 1-\varepsilon,$ where $\varepsilon = -\frac{ D_2   }{ B_2 }(T')^{-2H}  + 
\frac{ U(4/3)   }{ B_2 }$.
Lemma \ref{lem3} shows that $t>T_4 $, ensures that $s-t>K$ implies $\rho(s,t) \leq 1$.
Finally, set $\bar{T}=\max\{ T', T_{4}, 3K \}$. It is clear 
-- using (\ref{b0}) in the proof of Lemma \ref{lem3} -- 
that for fixed $t$, the function $(s,t) \mapsto \rho(s,t)$ is bounded. 
So let $D_1'=\max_{0<t<\bar{T}} \sup_{s\geq 0}\rho(s,t)$ and define $R = \max\{D_1,D_1'\}-1$
It remains to guarantee $T/2+K<\eta T$ but this follows since $\bar{T}<t<T/2$ implies $T>6K$. The quantities $\eta$, $\bar{T}$, $R$, $K$ and $\varepsilon$ constructed above 
fulfill all the requirements.
\end{proof} 




\begin{proof}[Proof of Theorem \ref{theo}.]
\noindent First we determine the maximal expected growth rate of portfolios. Let us define $$Q(T) = \sum_{t=0}^T E | S_t |^{\frac{\alpha}{\alpha-1}}.$$ Let $G(x):=\lambda|x|^{\alpha}$, $x\in\mathbb{R}$ and denote its Fenchel-Legendre conjugate
\begin{equation}
G^*(y) := \sup_{x\in\mathbb{R}} (xy-G(x))= 
\frac{\alpha-1}{\alpha} \alpha^{\frac 1{1-\alpha}}\lambda^{\frac{1}{1-\alpha }} |y|^{\frac{\alpha }{\alpha -1}},
\qquad
y\in\mathbb{R}.
\end{equation}
By definition of $G^{*}$, for all $\phi\in\mathcal{G}(T)$,
$$
X_T(\phi) \leq \sum_{t=0}^T G^*(-S_t)    =   C\sum_{t=0}^T |S_t|^{\alpha/(\alpha-1)}
$$
for some $C>0$ and hence
\begin{equation}
EX_T(\phi)\leq C Q(T)<\infty.
\end{equation}
Note that this bound is independent of $\phi$. Using Lemma \ref{lem2} it holds that
\begin{align}\label{e1}
\begin{split}
Q(T) & =C_{\frac{\alpha}{\alpha-1}} \sum_{t=0}^T \mathrm{var}(S_t)^{\frac{\alpha}{2(\alpha-1)}}
\\ &  \leq C_{\frac{\alpha}{\alpha-1}} \sum_{t=0}^{T_2-1} \mathrm{var}(S_t)^{\frac{\alpha}{2(\alpha-1)}} + C_{\frac{\alpha}{\alpha-1}} B_2 \sum_{t=T_2}^T t^{\frac{H \alpha}{(\alpha-1)}}
\\ & \leq C_{\frac{\alpha}{\alpha-1}, T_2} +  C_{\alpha,H,B_2} T^{H \left ( 1+\frac{1}{\alpha-1} \right )+1}.
\end{split}
\end{align}
Thus the maximal expected profit grows as $T^{H \left ( 1+\frac{1}{\alpha-1} \right )+1}$  with the power of the horizon, this proves (\ref{M1}). Now, untill further notice, let $T$ be a multiple of $6$. With the strategy defined in (\ref{M05}), the dynamics takes the form 
\begin{align*}
X_T(\phi) = &  \sum_{t=0}^{T/2} |S_t|^{\frac{\alpha}{\alpha-1}} 
\\ & - \sum_{t=0}^{T/2} \lambda |S_t|^{\frac{\alpha}{\alpha-1}}
\\  & -  \frac{1}{T/2} \sum_{s=T/2+1}^T S_s \sum_{t=0}^{T/2} \mathrm{sgn}(S_t)|S_t|^{\frac{1}{\alpha-1}}  
\\ & - \frac{1}{T/2}  \sum_{s=T/2+1}^T \lambda \left|\sum_{t=0}^{T/2}\mathrm{sgn}(S_t)|S_t|^{\frac{1}{\alpha-1}} \right|^{\alpha}.
\end{align*}
In the above expression let us denote the four terms by 
$I_1(T)$, $I_2(T)$, $I_3(T)$, $I_4(T)$, respectively, so that 
$$X_T(\phi) = I_1(T) - I_2(T) - I_3(T) - I_4(T).$$
The upper bound constructed in (\ref{e1}) for $Q(T)$ right away gives us an upper estimate for $EI_1(T)$ as $EI_1(T) = Q(T/2) $. Using Lemma \ref{lem1}, we likewise present a lower estimate as 
\begin{align}\label{c2}
\begin{split}
Q(T/2)=E[I_1(T)] & = C_{\frac{\alpha}{\alpha-1}} \sum_{t=0}^{T/2} \mathrm{var}(S_t)^{\frac{\alpha}{2(\alpha-1)}}
\\  & \geq C_{\frac{\alpha}{\alpha-1}} \sum_{t=0}^{T_1-1} \mathrm{var}(S_t)^{\frac{\alpha}{2(\alpha-1)}} +  C_{\frac{\alpha}{\alpha-1}} B_1 \sum_{t=T_1}^{T/2} t^{\frac{H \alpha}{\alpha-1}}
\\ & \geq  C_{\frac{\alpha}{\alpha-1}, H, B_1,T_1} + C_{\frac{\alpha}{\alpha-1}, H, B_1} T^{H(1+\frac{1}{\alpha-1})+1},
\end{split}
\end{align}
To treat the terms  $I_2(T)$ and $I_4(T)$, note that with $\alpha>1$ the function $x \mapsto |x|^{\alpha}$ is convex, thus applying Jensen's inequality
\begin{align}\label{e2}
|EI_4(T)| \leq  E|I_2(T)| & = \lambda E \left [  \sum_{t=0}^{T/2}   |S_t|^{\frac{\alpha}{\alpha-1}} \right  ] =  \lambda\sum_{t=0}^{T/2} E   |S_t|^{\frac{\alpha}{\alpha-1}} = \lambda E[I_1(T)] = \lambda Q(T/2).
\end{align}
Controlling term $I_3(T)$ is done via exploiting a specific property of Gaussian processes, namely that $S_s$ for  $s>t$ can be decomposed as $S_s = \rho(s,t)S_t + W_{s,t}$, where $W_{s,t}$ is independent of $S_t$ and zero mean. With this, observe that
\begin{align}\label{c1}
\begin{split}
EI_3(T)=&\frac{1}{T/2}\sum_{s=T/2+1}^T  \sum_{t=0}^{T/2} E[\rho(s,t)S_t \mathrm{sgn}(S_t)|S_t|^{\frac{1}{\alpha-1}}] \\ &= \frac{1}{T/2}\sum_{s=T/2+1}^T  \sum_{t=0}^{T/2} E[\rho(s,t)|S_t|^{\frac{\alpha}{\alpha-1}}].
\end{split}
\end{align}
Let the constants $\bar{T}$,  $R$, $K$, $\eta=2/3$  and $\varepsilon$ be as in Lemma \ref{LEM}, and decompose the double sum in (\ref{c1}) as
\begin{align*}
\sum_{s=T/2+1}^T \sum_{t=0}^{T/2} = \sum_{s=T/2+1}^{T}\sum_{t=0}^{\bar{T}-1} + \sum_{s=T/2+1}^{T/2+K}\sum_{t=\bar{T}}^{T/2} + \sum_{s=T/2+K+1}^{\eta T}\sum_{t=\bar{T}}^{T/2} + \sum_{s=\eta T +1}^{T}\sum_{t=\bar{T}}^{T/2}
\end{align*}
Note that  applying the upper bound developed in Lemma \ref{LEM} to the double sum in  (\ref{c1}), the summand no longer depends on the running variable of the outer sum. Denoting $C_{\bar{T}} := \sum_{t=0}^{\bar{T}-1} E|S_t|^{\frac{\alpha}{\alpha-1}} $, this implies that
\begin{align*}
E I_3(T) & \leq   \left (  \sum_{t=0}^{T/2} + R \sum_{t=0}^{\bar{T}-1} + \frac{2 R K}{T} \sum_{t=\bar{T}}^{T/2} -2\varepsilon  \left(1-\frac{2}{3}\right ) \sum_{t=\bar{T}}^{T/2} \right )  E|S_t|^{\frac{\alpha}{\alpha-1}} 
\\ & =   E[I_1(T)] + \left ( R \sum_{t=0}^{\bar{T}-1} + \frac{2 R K}{T} \sum_{t=\bar{T}}^{T/2} - \frac{2 \varepsilon}{3}   \sum_{t=\bar{T}}^{T/2} \right )  E|S_t|^{\frac{\alpha}{\alpha-1}} 
\\ & = E[I_1(T)] + \left ( \left (  R + \frac{2\varepsilon}{3} -\frac{2 R K}{T} \right ) \sum_{t=0}^{\bar{T}-1} + \frac{2 R K}{T} \sum_{t=0}^{T/2} -\frac{2\varepsilon}{3} \sum_{t=0}^{T/2} \right )  E|S_t|^{\frac{\alpha}{\alpha-1}} 
\\ & = \left ( 1 -\frac{2\varepsilon}{3}  \right ) E[I_1(T)]
  + \left( R+  \frac{2 \varepsilon}{3} - \frac{2RK}{T}   \right) C_{\bar{T}}  
 + \frac{2 R K}{T} E[I_1(T)] ,
\end{align*}
so we have
\begin{align*}
E[I_1(T)]-E[I_3(T)] & \geq   \frac{2\varepsilon}{3} E[I_1(T)] 
- \left( R+ \frac{2 \varepsilon}{3} - \frac{2RK}{T}   \right) C_{\bar{T}}
 - \frac{2 R K}{T} E[I_1(T)]
\end{align*}
The above, using (\ref{e2}), boils down to
\begin{align*}
X_T(\phi) \geq \frac{2\varepsilon}{3} Q(T/2)
  - \left( R+ \frac{2 \varepsilon}{3} - \frac{2RK}{T}   \right) C_{\bar{T}} 
 - \frac{2 R K}{T} Q(T/2) - 2 \lambda Q(T/2)
\end{align*} 
Using (\ref{e1}) and (\ref{c2}), with $\lambda <  \varepsilon/3$, dividing through with $T^{H(1+\frac{1}{\alpha-1})+1}$ proves the statement in (\ref{M2}) with the constraint that the limiting operation runs through multiples of $6$.  Now let $T$ be general.  The same calculations can be done as above, with minor changes in the formulas corresponding to the upper and lower limits in summations according to taking the appropriate floor values.  That is, in the last inequality $Q(3 \lfloor T/6 \rfloor ) $ appears - instead of  $Q(T / 2 ) $ - and it grows in the order of $( 6 \lfloor T/6 \rfloor)^{H(1+\frac{1}{\alpha-1})+1}$, and using that $6 \lfloor T/6 \rfloor / T  $ tends to $1$ when $T$ is large, the proof of Theorem \ref{theo}, noting $\chi=2H-2$, is complete.
\end{proof}



\end{document}